\providecommand{\U}[1]{\protect\rule{.1in}{.1in}}
\providecommand{\U}[1]{\protect\rule{.1in}{.1in}}
\newif\ifcomments
\newcommand{\bluecomment}[1]{\ifthenelse{\boolean{bluecomments}}{\textcolor{blue}{#1}}{}}
\newcounter{mycounter}
\numberwithin{mycounter}{section}
\newtheorem{theorem}[mycounter]{Theorem}
\theoremstyle{plain}
\newtheorem{claim}[mycounter]{Claim}
\newtheorem{corollary}[mycounter]{Corollary}
\newtheorem{lemma}[mycounter]{Lemma}
\newtheorem{proposition}[mycounter]{Proposition}
\numberwithin{equation}{section}
\theoremstyle{definition}
\begin{document}
\title[Grim Raindrop]{Grim Raindrop: a translating solution to curve diffusion flow }
\author{W. Jacob Ogden}
\address{Department of Mathematics\\
University of Washington, Seattle, WA 98105}
\email{wjogden@uw.edu}
\author{Micah Warren}
\address{Department of Mathematics\\
University of Oregon, Eugene, OR 97403}
\email{micahw@uoregon.edu}

\begin{abstract}
We show the existence of a properly immersed translating solution to curve
diffusion flow in the plane. Curve diffusion flow is a higher order version of
curve shortening flow, namely
\[
\left(  \frac{dX}{dt}\right)  ^{\perp}=-\kappa_{ss}N.
\]

\end{abstract}
\maketitle

\ 

\section{Introduction}

In this note we present a nontrivial properly immersed translating solution to
the curve diffusion flow. Curve diffusion flow is a fourth order analogue to curve shortening flow. For closed curves, it is a gradient flow of arc length among curves enclosing the same signed area. 
This flow has a long history of interest in a wide variety of fields in both
pure and applied mathematics, going back to Mullins \cite{Mullins1957} in 1957; see \cite{figure8} for a more complete list of references. In
higher dimensions, the curve diffusion flow generalizes to the surface
diffusion flow for hypersurfaces \cite{EMS}, and to the gradient flow for
volume of Lagrangian submanifolds within a given Hamiltonian isotopy class
\cite{CW24}. 

In \cite{Polden} it was shown that singularities occur for the flow. In
particular, an immersed figure-8 curve must develop singularities under the
flow. Other examples of curves which develop singularities are provided in \cite{EscherIto}. It is a natural question to consider self-similar solutions, as these
may help model the formation of singularities.

In \cite{figure8} it was shown that:

\begin{itemize}
\item (Corollary 4) The only open, properly immersed stationary solutions to
the flow are lines;

\item There are many stationary solutions that are not properly immersed (Euler spirals);

\item (Proposition 11) If an open translator has curvature vanishing at
$+\infty$ and $-\infty$ and the angle the translating vector makes with the
tangent is the same at both $+\infty$ and $-\infty$, then the translator is a
stationary line;

\item (Proposition 12) If $X: \mathbb{R} \to\mathbb{R}^{2}$ is an open
properly immersed translator satisfying
\[
\lim_{\rho\to\infty} \frac{ 1}{ \rho^{2}} \int_{X^{-1} (B_{\rho}) }\kappa^{2}
\: ds + \frac1 \rho\int_{X^{-1} ( B_{2\rho} ) } | \langle\vec E , T \rangle|
\: ds =0,
\]
where $\vec E$ is the translating velocity and $T$ is the oriented unit
tangent vector, then $X(\mathbb{R})$ is a straight line.
\end{itemize}

The authors asked whether the growth condition or properness are necessary in
Proposition 12. Our main result partially answers this question by showing
that properness by itself does not imply the translator is a line.

Our main result is:

\begin{theorem}
\label{theorem} There exists a properly immersed translating solution to the
curve diffusion flow with nonconstant curvature.
\end{theorem}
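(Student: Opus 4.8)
The plan is to reduce the translator equation to a scalar third-order ODE and to produce a heteroclinic orbit of it by a shooting argument. Parametrize the curve by arc length $s$, write $T=(\cos\theta(s),\sin\theta(s))$ for the unit tangent and $\kappa=\theta_s$ for the signed curvature. A translator moves rigidly, $X(s,t)=X(s,0)+t\vec E$, so the flow condition $\langle dX/dt,N\rangle=-\kappa_{ss}$ becomes $\langle\vec E,N\rangle=-\theta_{sss}$; after a rigid motion and a scaling taking $\vec E$ to $(1,0)$ (with the standard normal/curvature convention) this reads
\[
\theta_{sss}=\sin\theta .
\]
It therefore suffices to produce a nonconstant solution $\theta:\mathbb{R}\to\mathbb{R}$ of this ODE for which $X(s)=X(0)+\int_0^s(\cos\theta,\sin\theta)\,du$ is a proper immersion, and a heteroclinic orbit joining two of the equilibria $\theta\equiv k\pi$ (which represent straight lines) does exactly this: along such an orbit $\cos\theta\to\pm1$ at the two ends, so $|X(s)|\to\infty$ as $|s|\to\infty$ and $X$ is proper; $|X_s|=1$, so $X$ is an immersion; and $\theta_s$ is not constant, since a constant would force $0=\theta_{sss}=\sin\theta$ and hence $\theta$ itself constant. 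Geometrically this is a ``grim-reaper''-type translator for the diffusion flow, asymptotic to two parallel lines but with exponentially damped oscillatory tails --- whence ``raindrop''.

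The argument rests on two structural features of $\theta_{sss}=\sin\theta$. First, it is reversible: $\theta(s)\mapsto\pi-\theta(-s)$ carries solutions to solutions, so after time reversal the flow on $(\theta,\theta_s,\theta_{ss})$-space commutes with the involution $R(\theta,\theta_s,\theta_{ss})=(\pi-\theta,\theta_s,-\theta_{ss})$, which interchanges the equilibria $p_0=(0,0,0)$ and $p_\pi=(\pi,0,0)$ and swaps their stable and unstable manifolds. Linearizing, the eigenvalues at $p_0$ and at $p_\pi$ are the cube roots of $+1$ and of $-1$ respectively, so each equilibrium is a saddle-focus: one of its invariant manifolds is one-dimensional and the other a two-dimensional spiral, the two dimensions being exchanged between the points. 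Since $\mathrm{Fix}(R)=\{\theta=\pi/2,\ \theta_{ss}=0\}$ and an orbit meeting $\mathrm{Fix}(R)$ is automatically $R$-symmetric, a solution with initial data $(\pi/2,v,0)$ that converges forward to an equilibrium must converge backward to its $R$-image, and so is a heteroclinic. Second, the quantity $H:=\theta_s\theta_{ss}+\cos\theta$ is monotone: $H'=(\theta_{ss})^2\ge 0$, with equality only along the equilibrium solutions. Because $H(p_{k\pi})=(-1)^k$, this already excludes homoclinic orbits and forces every heteroclinic to run from an odd-multiple-of-$\pi$ line to an even-multiple-of-$\pi$ line.

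Monotonicity of $H$ also makes convergence automatic for bounded orbits, which is the decisive point. Suppose the solution with data $(\pi/2,v,0)$ has $(\theta,\theta_s,\theta_{ss})$ bounded on $[0,\infty)$. Then $H$ is bounded and nondecreasing, so $\int_0^\infty(\theta_{ss})^2\,ds<\infty$; since $\theta_{sss}=\sin\theta$ is bounded, $\theta_{ss}$ is uniformly continuous, hence $\theta_{ss}(s)\to0$. The $\omega$-limit set of the orbit is therefore a nonempty connected invariant set on which $\theta_{ss}\equiv0$, and by the equation such a set can only consist of equilibria, so it is a single point $p_{k\pi}$; finally $H(0)=\cos(\pi/2)=0\le\lim_{s\to\infty}H(s)=(-1)^k$ forces $k$ even. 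By reversibility the orbit then runs backward to $p_{(1-k)\pi}$, and $p_{(1-k)\pi}\ne p_{k\pi}$, so it is the required heteroclinic. The problem is thus reduced to exhibiting one value of $v$ whose forward orbit is bounded.

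This last step is a shooting argument, and I expect it to be the main obstacle. As $v$ varies, the angle along the orbit should either escape to $+\infty$, escape to $-\infty$, or stay bounded; exploiting the fixed sign of $\theta_{sss}=\sin\theta$ on each strip $\{k\pi<\theta<(k+1)\pi\}$, where $\theta_{ss}$ is strictly monotone, one checks that $v=0$ drives the orbit up through $\theta=\pi$ and on to $+\infty$, while $v$ sufficiently negative drives it down through $\theta=0$ and on to $-\infty$. If these two escape behaviors are open conditions on $v$ and together exhaust the non-bounded orbits, then connectedness of an interval in $v$ produces a value $v_0$ with bounded orbit, hence a heteroclinic, completing the proof. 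The delicate part is precisely this control of the far field --- above all, ruling out orbits whose angle oscillates with unbounded amplitude; the tools for it are the monotone $H$, the strip-by-strip monotonicity of $\theta_{ss}$, and the saddle-focus structure near the equilibria. The remaining ingredients are routine: solutions exist globally because $|\theta_{sss}|\le1$, and the limiting curve is proper despite its oscillatory ends by the exponential decay rates coming from the linearization.
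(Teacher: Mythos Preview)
Your convergence argument via the monotone quantity $H=\theta_s\theta_{ss}+\cos\theta$, with $H'=(\theta_{ss})^2\ge0$, is correct and is genuinely cleaner than the paper's route: the paper proves convergence of bounded solutions by a hands-on oscillation analysis (Lemmas~\ref{strictmin} and~\ref{strictbounce} show that successive extrema strictly contract, with a quantitative rate), whereas you get $\theta_{ss}\to0$ from $\int(\theta_{ss})^2<\infty$ plus uniform continuity, and then read the $\omega$-limit set off the equation. One small remark: your shooting only delivers $\theta$ bounded, so you should say explicitly that $|\theta_{sss}|\le1$ together with Landau--Kolmogorov (or a direct argument) then bounds $\theta_s$ and $\theta_{ss}$ as well.

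The real gap is the shooting itself, and you correctly flag it. For the \emph{unmodified} equation $\theta_{sss}=\sin\theta$ you need that the ``escape to $+\infty$'' and ``escape to $-\infty$'' sets are open and exhaust the unbounded orbits; equivalently, you must rule out orbits whose amplitude grows without bound but which do not tend to $\pm\infty$. Your listed tools (the Lyapunov $H$, strip-by-strip monotonicity of $\theta_{ss}$, saddle-focus local structure) do not obviously do this. Even your endpoint claim that $v=0$ escapes to $+\infty$ is not immediate: once $\theta$ crosses $\pi$ the sign of $\sin\theta$ reverses, so the ``trifecta'' fails and the orbit can in principle turn back. The paper resolves exactly this by replacing $-\cos$ with a globally monotone $f$ (so that leaving $[0,\pi]$ is irreversible by the trifecta), proving an order-preservation lemma for the resulting one-parameter family, and then establishing a ``no-return'' statement (Proposition~\ref{p36}) which makes the nested-interval shooting go through by compactness. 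That no-return proof uses precisely the odd-symmetry/density comparison of Lemma~\ref{strictmin}. In other words, the oscillation-comparison work your Lyapunov function lets you avoid in the convergence step reappears, in the paper, as the key ingredient that closes the shooting; your proposal has not yet supplied a substitute for it.
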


Our strategy is to use a shooting method together with symmetry to show that a
bounded yet nontrivial solution to a certain third order nonlinear ODE
(\ref{ODE}) exists. The solution of the ODE will describe the angle of the
tangent vector of the profile curve for the translating solution of the flow.

\section{Curve diffusion flow}

For a one dimensional manifold $M$, a map
\[
X(\omega,t) : M\times I \to\mathbb{R}^{2}%
\]
which satisfies
\begin{equation}
\left(  \frac{dX}{dt}\right)  ^{\perp}=-\kappa_{ss}N \label{equation1}%
\end{equation}
is a solution to the curve diffusion flow. Here $\kappa$ is the curvature,
which can be defined as the derivative of the angle $\theta$ that the unit
tangent vector makes with the $x$-axis with respect to the arc length
parameter $s$. The unit normal vector $N$ is the complex rotation of the
oriented unit tangent vector of the curve.

For compact $M$ it is easy to check using the first variational formula that
this is the gradient flow for arc length on the space of curves with the same
enclosed area, where the tangent space consists of variations of the form
\[
\frac{dX}{dt}=J\nabla f
\]
for $f$ a function on the curve $\gamma$, where $J$ is the standard complex
rotation and the tangent space is equipped with the $L^{2}$ metric
\[
\left\Vert f\right\Vert ^{2}=\int_{\gamma}f^{2}ds.
\]

\subsection{Translating solutions}

Suppose that the unprojected flow satisfies
\[
\frac{dX}{dt}=\vec{E}%
\]
for some fixed unit vector $\vec{E}.$ From (\ref{equation1}) we have
\[
\vec{E}\cdot N=-\kappa_{ss}.
\]
Now, without loss of generality, let
\[
\vec{E}=\left(  0,1\right)  =J(1,0)
\]
so that
\begin{align*}
\vec{E}\cdot N  &  =J(1,0)\cdot JT\\
&  =(1,0)\cdot T\\
&  =\cos\theta,
\end{align*}
and the equation becomes
\[
\cos\theta=-\kappa_{ss}%
\]
or
\[
\cos\theta=-\theta_{sss},%
\]
which we write as a third order nonlinear ODE
\begin{equation}
\theta_{sss}=-\cos\theta. \label{ODE}%
\end{equation}

An arc-length parameterized path
\[
\gamma:\mathbb{R\rightarrow R}^{2}%
\]
whose angle $\theta$ satisfies \eqref{ODE} is the profile of a
translating solution to the curve diffusion flow. If, in addition
\begin{align*}
\theta &  \rightarrow\frac{\pi}{2}\text{ as }s \to\infty,\\
\theta &  \rightarrow-\frac{\pi}{2}\text{ as }s \to-\infty
\end{align*}
then the solution will be properly immersed and have nonconstant curvature.

Before constructing a solution with the desired properties, we give a simple criterion which provides a large family of solutions to \eqref{ODE} which correspond to translators which are not properly immersed, as their curvatures grow as asymptotically linear functions of arc length (which is also the case for the stationary Euler spiral solutions to the curve diffusion flow). 

\begin{proposition} Suppose that $\theta$ satisfies \eqref{ODE} with $\theta_s (0) \geq a >0$ and $\theta_{ss} (0 ) > \frac 2 a .$ Then the corresponding curve $\gamma$ is not properly immersed. 
\end{proposition}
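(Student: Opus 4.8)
The plan is to show that under the stated hypotheses the tangent angle $\theta$ is strictly increasing on $[0,\infty)$ with $\theta_s\to\infty$, and then to deduce that $\gamma(s)$ converges to a point of $\mathbb{R}^2$ as $s\to\infty$; a curve defined on all of $\mathbb{R}$ that limits to a point cannot be a proper map.

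The first and main step is to produce a monotone quantity. Multiplying \eqref{ODE} by $\theta_s$ and using $\theta_s\theta_{sss}=\frac{d}{ds}(\theta_s\theta_{ss})-\theta_{ss}^2$ and $\theta_s\cos\theta=\frac{d}{ds}\sin\theta$, one gets
\[
\frac{d}{ds}\bigl(\theta_s\theta_{ss}+\sin\theta\bigr)=\theta_{ss}^{\,2}\ \ge\ 0 ,
\]
so $Q(s):=\theta_s\theta_{ss}+\sin\theta$ is non-decreasing on the interval of existence, which (since the right-hand side of \eqref{ODE} is bounded) is all of $[0,\infty)$. The hypotheses $\theta_s(0)\ge a>0$ and $\theta_{ss}(0)>2/a$ give $\theta_s(0)\theta_{ss}(0)>2$, hence $Q(0)>2+\sin\theta(0)\ge 1$. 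Therefore $Q(s)\ge Q(0)>1$ for all $s\ge 0$, and so
\[
\theta_s(s)\,\theta_{ss}(s)=Q(s)-\sin\theta(s)\ \ge\ Q(0)-1=:c>0 .
\]
In particular neither $\theta_s$ nor $\theta_{ss}$ can vanish on $[0,\infty)$; since both are positive at $s=0$, both remain positive there. Hence $\theta_s$ is increasing with $\theta_s\ge a$, and $\frac{d}{ds}(\theta_s^{\,2})=2\theta_s\theta_{ss}\ge 2c$ yields $\theta_s(s)^2\ge a^2+2cs$, so $\theta_s(s)\to\infty$ and $\theta(s)\to\infty$.

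Next I would show that $\gamma(s)=\gamma(0)+\bigl(\int_0^s\cos\theta\,d\sigma,\ \int_0^s\sin\theta\,d\sigma\bigr)$ converges. Writing $\cos\theta=(\sin\theta)'/\theta_s$ and integrating by parts,
\[
\int_0^s\cos\theta\,d\sigma=\left.\frac{\sin\theta}{\theta_s}\right|_0^s+\int_0^s\frac{\sin\theta\;\theta_{ss}}{\theta_s^{\,2}}\,d\sigma .
\]
The boundary term tends to a finite limit because $|\sin\theta|/\theta_s\le 1/\theta_s\to 0$, and the remaining integrand is bounded in absolute value by $\theta_{ss}/\theta_s^{\,2}=-(1/\theta_s)'$, whose integral over $[0,\infty)$ equals $1/\theta_s(0)<\infty$; so that integral converges absolutely. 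The same computation with $\sin\theta=-(\cos\theta)'/\theta_s$ handles the other coordinate, so $\gamma(s)\to p$ for some $p\in\mathbb{R}^2$. Then for any $\varepsilon>0$ there is $S$ with $\gamma([S,\infty))\subset\overline{B_\varepsilon(p)}$, making $\gamma^{-1}(\overline{B_\varepsilon(p)})$ a closed unbounded, hence non-compact, subset of $\mathbb{R}$, while $\overline{B_\varepsilon(p)}$ is compact. Therefore $\gamma$ is not proper, i.e. not properly immersed.

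I expect the only genuine subtlety to be spotting the quantity $Q=\theta_s\theta_{ss}+\sin\theta$ and the bookkeeping that the exact constants $a$ and $2/a$ in the hypotheses are precisely what forces $Q(0)>1$ (and that $\sin\theta$ can be absorbed by the slack $Q(0)-1$). Everything afterward — positivity and unbounded growth of $\theta_s$, convergence of the oscillatory integrals via integration by parts, and the failure of properness — is routine. One could also record, as the statement advertises, that $\theta_s=\kappa$ in fact grows at least like $\sqrt{s}$, but this is not needed for the conclusion.
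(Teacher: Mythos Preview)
Your proof is correct but follows a genuinely different route from the paper's. The paper bounds $\bigl|\int_0^s\cos\theta\,d\sigma\bigr|\le 2/a$ directly via integration by parts (the same identity you use at the end), which yields $\theta_{ss}(s)\ge\theta_{ss}(0)-2/a>0$ for all $s\ge0$; from this it obtains a \emph{linear} lower bound on $\kappa=\theta_s$ and finishes with the geometric observation that a curve of increasing curvature lies inside its initial osculating circle. You instead discover the monotone quantity $Q=\theta_s\theta_{ss}+\sin\theta$ with $Q'=\theta_{ss}^2\ge0$, obtain $\theta_s\theta_{ss}\ge Q(0)-1>0$, and hence only a $\sqrt{s}$ lower bound on $\kappa$; you then compute explicitly that $\gamma(s)$ converges to a point. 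The paper's argument gives the sharper growth rate (matching the ``asymptotically linear'' remark in the surrounding text) and a clean geometric ending; your energy method is more self-contained, needing no osculating-circle comparison, and the role of the hypothesis $\theta_{ss}(0)>2/a$ is made transparent through the inequality $Q(0)>1$. It is worth noting that the same integration-by-parts identity underlies both arguments, just deployed for different purposes: the paper uses it to control $\theta_{ss}$, you use it to show $\gamma$ has a limit.
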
 

\begin{proof} We claim that $\theta_{ss} $ is uniformly bounded away from 0 as $s \to \infty$. First observe that $\theta_{ss}$ can never reach $0$. For contradiction, assume that $s_0>0$ is the first time such that $\theta_{ss} (s_0) =0$. Then for any $s \in [0, s_0]$, \begin{align} 
|\theta_{ss} (s) - \theta_{ss} (0) | &= \left | \int _0^s  \cos ( \theta(\sigma)  ) \: d\sigma \right |  \label{integralestimate}\\
& = \left | \int_0^s \frac{1}{ \theta_s ( \sigma )} \frac{d}{d \sigma } \sin ( \theta ( \sigma )) \: d \sigma \right |. \notag
\end{align}

Integrating by parts, 
\begin{align*} 
\left | \int_0^s \frac{1}{ \theta_s ( \sigma )} \frac{d}{d \sigma } \sin ( \theta ( \sigma )) \: d \sigma \right | 
&  \leq  \left |  \frac{ 1 }{ \theta_s ( s) } \sin(\theta (s))- \frac{1}{\theta_s(0)} \sin ( \theta (0) ) \right | 
\\ & \quad + \left | \int_0^{s} \frac{ d}{ d \sigma } \frac{ 1}{ \theta _s( \sigma ) } \sin ( \theta ( \sigma ) )  \: d \sigma \right | \\
& \leq \frac{ 1} { \theta _s ( 0) } + \frac{1}{ \theta_s(s) }  + \left | \int_0^s \frac{ d}{d\sigma} \frac{ 1}{ \theta _s (\sigma) } \: d \sigma \right | \\
& = \frac{ 1} { \theta _s ( 0) } + \frac{1}{ \theta_s(s) }  + \left|  \frac{ 1} { \theta _s ( s) } - \frac{1}{ \theta_s(0) } \right |   \\
& = \frac 2 a,
\end{align*}  
where we used the fact that $\theta_s $ is monotone increasing. 
This is a contradiction since $\theta_{ss} (0) > \frac 2 a,$ so $\theta_{ss} >0$ for all $s>0$. 
Considering \eqref{integralestimate} again now shows that 
$$ \theta_{ss} (s)\geq \theta_{ss} (0) - \frac 2 a >0$$
for all $s>0$, so $\theta_{ss}$ is uniformly bounded below by a positive constant $\varepsilon.$ Then 
$$ \kappa ( s) - \kappa (0) = \int_0^s \theta_{ss} \: ds \geq \varepsilon s, $$
so the curvature of $\gamma$ grows at least linearly as $s \to \infty$. 
(Note also that  
$$  \kappa ( s) - \kappa (0) \leq\left ( \theta_{ss}(0) + \frac 2a\right  )s, $$
so the curvature also has a linear upper bound.) 
Since the curvature of $\gamma$ is increasing, the piece $\{ \gamma (s) \: : \: s > 0 \} $ is contained in the osculating circle at $0$. Since $\gamma$ has infinite arc length inside a compact set, it is not properly immersed. 
\end{proof}

\subsection{Strategy of construction}

Our strategy for constructing a properly immersed translating solution is:

\begin{enumerate}
\item Modify the ODE \eqref{ODE} to obtain a monotone ODE \eqref{ODE2}. Show that any solution to the modified equation which stays bounded for $s>0$ must converge to $\pi/2 $ as $s \to \infty$.

\item Show that any solution to the modified equation with suitable initial conditions \eqref{ICs} which stays bounded for $s> 0$ is a solution to the original equation \eqref{ODE}. 

\item Demonstrate the existence of a bounded solution $\theta(s)$ to the modified equation. 

\item Reflect the solution in Step 3 across $s=0$ to get an entire solution which
also converges to $-\pi/2$ as $s\rightarrow-\infty.$

\item With $\theta$ determined as a function of arc length, construct the path $\gamma$ forwards and backwards. The tangent directions
approach $\pm\pi/2$ so the solution must be properly immersed.
\end{enumerate}

\begin{figure}[h]
\centering
\includegraphics[width=.3\textwidth]{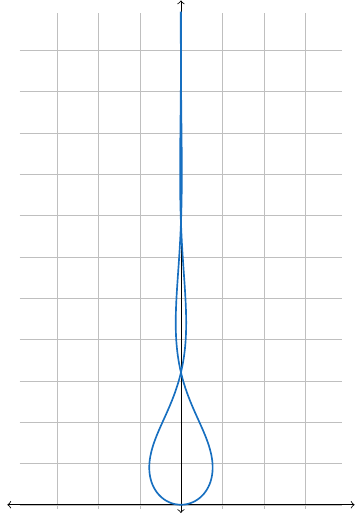} \caption{Grim raindrop.
It's grim because it falls upwards.}%
\label{fig:gr}%
\end{figure}

\section{Proof of Theorem \ref{theorem}}

\subsection{Step 1}

We start by modifying the ODE \eqref{ODE} slightly so that the right-hand side
is monotone: Consider the ODE%
\begin{equation}
u^{\prime\prime\prime}=f(u) \label{ODE2}%
\end{equation}
with \
\[
f(u)=%
\begin{cases}
-\cos u & \text{ if }0\leq u\leq\pi\\
-1 & \text{ if }u\leq0\\
+1 & \text{ if }u\geq\pi.
\end{cases}
\]
It is more straightforward to apply shooting methods to monotone ODE. If we
have a solution whose values lie within the range $\left[  0,\pi\right]  $
this will also be a solution of the original ODE \eqref{ODE}. Our goal is then
to shoot to find a solution to the modified ODE, and then argue that this
solution satisfies the original ODE.

The following claim sets up a shooting method.

\begin{lemma}
\label{order} Let $u_{a}$ be the solution to the ODE \eqref{ODE2}
\[
u^{\prime\prime\prime}=f(u)
\]
with initial conditions
\begin{align}
u(0)  &  =0,\label{ICs}\\
u^{\prime}(0)  &  =a,\nonumber\\
u^{\prime\prime}(0)  &  =0.\nonumber
\end{align}
These have the following order preservation property: If
$
a<b
$
then
\[
u_{a}(s)<u_{b}(s)\text{ for all }s>0.
\]

\end{lemma}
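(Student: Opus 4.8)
The plan is to study the difference $w := u_b - u_a$. From the initial conditions \eqref{ICs} we have $w(0)=0$, $w'(0) = b-a > 0$, and $w''(0)=0$, while $w$ solves the (non-autonomous) equation $w''' = f(u_b) - f(u_a)$. The structural feature to exploit is precisely the one for which \eqref{ODE2} was introduced: $f$ is continuous and nondecreasing (it is constant outside $[0,\pi]$ and has $f' = \sin u \geq 0$ on $[0,\pi]$, and it matches up continuously at $u=0$ and $u=\pi$). Hence on any interval where $u_b \geq u_a$ we automatically get $w''' \geq 0$, and together with the sign-aligned initial data this will let us integrate up three times to conclude $w > 0$.

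Concretely I would argue by contradiction. Since $w(0)=0$ and $w'(0) > 0$, the function $w$ is positive on some initial interval $(0,\delta)$. Set $s_0 := \sup\{ s > 0 : w > 0 \text{ on } (0,s)\}$. If $s_0 = \infty$ we are done, so suppose $s_0 < \infty$; then $w > 0$ on $(0,s_0)$ and, by continuity, $w(s_0) = 0$. On all of $[0,s_0]$ we then have $u_b \geq u_a$, so $w''' = f(u_b) - f(u_a) \geq 0$ there. Since $w''(0)=0$, integrating gives $w'' \geq 0$ on $[0,s_0]$; since $w'(0) = b-a > 0$, integrating again gives $w' \geq b-a > 0$ on $[0,s_0]$; and since $w(0) = 0$, a third integration gives $w(s_0) \geq (b-a)s_0 > 0$, contradicting $w(s_0)=0$. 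Therefore $s_0 = \infty$, i.e. $u_a(s) < u_b(s)$ for all $s > 0$.

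I do not expect a genuine obstacle here: the argument is just a threefold integration once one observes that monotonicity of $f$ forces $w''' \geq 0$ on any interval where the desired ordering already holds. The point that makes it work cleanly — and that would fail for a general third-order scalar ODE — is that the nonlinearity in \eqref{ODE2} depends only on $u$, not on $u'$ or $u''$, and that the initial data for $w$ are sign-aligned, namely $w(0)=0$, $w'(0)>0$, $w''(0)=0$, so each successive antiderivative inherits positivity with no cancellation. Existence of the solutions $u_a$ and $u_b$ is not an issue, since $f$ is globally Lipschitz, so standard ODE theory applies on all of $\mathbb{R}$.
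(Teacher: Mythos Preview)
Your proof is correct and follows essentially the same approach as the paper's: both study the difference $w=u_b-u_a$, use the monotonicity of $f$ to get $w'''\ge 0$ wherever $w\ge 0$, and integrate up from the sign-aligned initial data to reach a contradiction. The only cosmetic difference is that the paper assumes a first zero of $w'$ rather than of $w$, but the mechanism is identical.
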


\begin{proof}
Let
\[
v(s)=u_{b}(s)-u_{a}(s).
\]
Then
\begin{align*}
v(0)  &  =0,\\
v^{\prime}(0)  &  =b-a,\\
v^{\prime\prime}(0)  &  =0,\\
v^{\prime\prime\prime}(0)  &  =0.
\end{align*}
So $v>0,$ and $v^{\prime}>0$ for small values of $s.$ Let $s_{0}$ be the first
value such that $v^{\prime}(s_{0})=0.$ Then
\[
0>\int_{0}^{s_{0}}v^{\prime\prime}(\tau)\:d\tau=\int_{0}^{s_{0}}\int_{0}%
^{\tau}\left(  f(u_{b})-f(u_{a})\right)  \: d\sigma\: d\tau
\]
but this is a contradiction, because on the whole interval we had
$f(u_{b})-f(u_{a})\geq0.$ There is no such $s_{0};$ we may conclude
$v^{\prime}>0$ and hence $u_{b}(s)>u_{a}(s)$ for all postive $s.$
\end{proof}
The following is a straightforward application of the fundamental theorem of
calculus. Because we refer to this argument frequently in the sequel we give
it a name.  
\begin{lemma}\label{trifecta}
(Trifecta).  Suppose that $v$ is a solution to a monotone third order ODE,
that is $v^{\prime\prime\prime}=h(v)$ for some monotone non-decreasing
function $h(v).$  If at any point $s$, $h(v(s)),v^{\prime}(s)$ and $v^{\prime\prime}(s)$
are all nonnegative and at least one of these is positive, the solution $v$
will increase forever without reaching a future critical point, and $s$ cannot be a local maximum itself.
\end{lemma}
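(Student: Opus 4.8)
The plan is a short bootstrapping argument, in the spirit of the proof of Lemma~\ref{order}, resting on one observation: after translating the distinguished point $s$ to the origin, whenever $v'\geq 0$ on an initial interval $[0,\sigma]$ the function $v$ is non-decreasing there, so monotonicity of $h$ gives $h(v)\geq h(v(0))\geq 0$ on $[0,\sigma]$, and hence $v''(\tau)=v''(0)+\int_0^\tau h(v(\rho))\,d\rho\geq v''(0)\geq 0$ on $[0,\sigma]$, so that $v'$ is in fact non-decreasing on $[0,\sigma]$. In other words, the three nonnegativity hypotheses at the origin force $v'$ to be non-decreasing on every initial interval on which it has not yet become negative.

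First I would check that $v'>0$ for small $s>0$; this holds in each of the three allowed configurations of the hypothesis: $v'$ is positive near $0$ by continuity if $v'(0)>0$; $v'(s)=v''(0)s+o(s)>0$ for small $s>0$ if $v'(0)=0<v''(0)$; and $v'(s)=\tfrac12 h(v(0))s^2+o(s^2)>0$ for small $s>0$, using $v'''(0)=h(v(0))$, if $v'(0)=v''(0)=0<h(v(0))$. Now suppose for contradiction that $v'$ has a zero on $[0,\omega)$, the maximal interval of existence to the right of $0$, and let $\sigma_0$ be the smallest such zero, so that $\sigma_0>0$ and $v'>0$ on $(0,\sigma_0)$. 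The observation above applies on $[0,\sigma_0]$ and shows $v'$ is non-decreasing there; but then $0<v'(\sigma_0/2)\leq v'(\sigma_0)=0$, a contradiction. Hence $v'$ has no zero on $(0,\omega)$, and since $v'>0$ near $0$ we conclude $v'>0$ throughout $(0,\omega)$.

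It follows that $v$ is strictly increasing on $(0,\omega)$, has no critical point there, and, since $v(s)>v(0)$ for every $s\in(0,\omega)$, the point $s$ is not a local maximum of $v$. When $h$ is bounded --- as it is for equation~\eqref{ODE2} --- there is no finite-time blow-up, so $\omega=\infty$ and $v$ increases for all future arc length. I do not anticipate a genuine obstacle: the argument is entirely elementary, the only slightly delicate point being the degenerate start $v'(0)=v''(0)=0$, which the Taylor expansion dispatches; everything else is the fundamental theorem of calculus, exactly as the name ``Trifecta'' and the surrounding text advertise.
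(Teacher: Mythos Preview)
Your proof is correct and is exactly the ``straightforward application of the fundamental theorem of calculus'' that the paper alludes to (the paper omits the proof entirely). Your bootstrap---nonnegativity of $v'$ on an initial interval forces $h(v)\ge h(v(0))\ge 0$, hence $v''\ge v''(0)\ge 0$, hence $v'$ non-decreasing---is the intended mechanism, and your case split on which of $v'(0),v''(0),h(v(0))$ is positive, together with the first-zero contradiction, cleanly handles the degenerate start and the ``no future critical point'' claim; your remark that bounded $h$ gives $\omega=\infty$ is the right way to justify ``forever'' in the paper's applications.
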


 We now begin analyzing the behavior of bounded solutions to \eqref{ODE2}. 

For cleaner presentation, let $v=u-\pi/2,$ and consider instead the following
ODE:
\begin{equation}
v^{\prime\prime\prime}=g(v) \label{ODEsin}%
\end{equation}
with \
\[
g(v)=%
\begin{cases}
\sin v & \text{ if }-\pi/2\leq v\leq\pi/2\\
-1 & \text{ if }v\leq-\pi/2\\
+1 & \text{ if }v\geq\pi/2.
\end{cases}
\]

\begin{lemma}
\label{strictmin}Suppose that $v$ is a solution to \eqref{ODEsin}. Suppose
that at $s_{\max}$ the solution $v$ has a positive local maximum and at
$s_{\min},$ the next critical point, $v$ has a negative local minimum. Then
$v(s_{\max})>\left\vert v(s_{\min})\right\vert $ .
\end{lemma}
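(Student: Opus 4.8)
The plan is to exploit a conserved-type quantity for the ODE \eqref{ODEsin} coming from multiplying by $v'$ and integrating, together with the oddness of $g$ on $[-\pi/2,\pi/2]$. First I would integrate the relation $v'''=g(v)$ against $v'$ over the interval $[s_{\max},s_{\min}]$. At both endpoints $v'=0$, so the term $\int v' v''' \, ds = [v'v'']_{s_{\max}}^{s_{\min}} - \int (v'')^2 \, ds = -\int_{s_{\max}}^{s_{\min}} (v'')^2 \, ds$, while $\int v' g(v) \, ds = G(v(s_{\min})) - G(v(s_{\max}))$ where $G'=g$. This gives
\[
G(v(s_{\min})) - G(v(s_{\max})) = -\int_{s_{\max}}^{s_{\min}} (v'')^2 \, ds \le 0,
\]
so $G(v(s_{\max})) \ge G(v(s_{\min}))$, with equality only if $v''\equiv 0$ on the interval (which is impossible since then $v$ would be linear between two critical points, forcing $v'\equiv 0$ and contradicting that $v$ actually moves from a positive value to a negative one).

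Next I would choose the antiderivative $G$ to be even: since $g$ is odd on $[-\pi/2,\pi/2]$ (namely $g(v)=\sin v$ there) and odd outside as well ($g=\pm 1$), the function $G(v) = \int_0^v g = 1-\cos v$ on $[-\pi/2,\pi/2]$, extended appropriately, is even and strictly increasing in $|v|$ on the relevant range. The strict inequality $G(v(s_{\max})) > G(v(s_{\min}))$ together with $G$ even and strictly monotone in $|v|$ then yields $|v(s_{\max})| > |v(s_{\min})|$, i.e. $v(s_{\max}) > |v(s_{\min})|$ since $v(s_{\max})>0$. One should check that the argument survives if $v$ leaves the range $[-\pi/2,\pi/2]$ on the interval; there $g$ is constant of the appropriate sign, so $G$ continues to be even and nondecreasing in $|v|$, and the same conclusion holds — though in fact a positive local maximum of a solution that subsequently decreases should be controlled so that this case is benign, and I would verify $v(s_{\max})<\pi/2$ is not actually needed for the energy comparison.

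The main obstacle I anticipate is the strictness: getting $G(v(s_{\max})) > G(v(s_{\min}))$ rather than $\ge$, and then converting a strict inequality on $G$-values into the strict inequality $v(s_{\max}) > |v(s_{\min})|$. Strictness of the energy drop is clear once one argues $v'' \not\equiv 0$ on $[s_{\max}, s_{\min}]$, which follows because $v$ genuinely changes (from $v(s_{\max})>0$ down to $v(s_{\min})<0$) so $v'$ is not identically zero, hence $v''$ is not identically zero. For the conversion step, one needs $G$ to be \emph{strictly} increasing in $|v|$ on the closed range swept out by $v$ on $[s_{\max},s_{\min}]$; this holds on $(-\pi/2,\pi/2)$ since $g(v)=\sin v$ has the sign of $v$ there, and on the complement $g=\pm1\ne 0$, so $G$ is strictly monotone in $|v|$ everywhere except possibly exactly at $|v|=\pi/2$ where it is still non-strictly handled — a short case check closes this. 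I would present the computation as a one-line integration by parts and keep the case analysis on the range of $v$ brief.
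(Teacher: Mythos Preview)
Your argument is correct and is genuinely different from the paper's. You multiply $v'''=g(v)$ by $v'$ and integrate between the two critical points to obtain the energy identity
\[
G(v(s_{\max}))-G(v(s_{\min}))=\int_{s_{\max}}^{s_{\min}}(v'')^{2}\,ds>0,
\]
and then use that $G(v)=\int_0^v g$ is even (since $g$ is odd) and strictly increasing in $|v|$ (since $g(v)>0$ for $v>0$) to conclude $v(s_{\max})>|v(s_{\min})|$. Strictness is clean: $v''\equiv 0$ would force $v'\equiv 0$ and hence $v$ constant, contradicting $v(s_{\max})>0>v(s_{\min})$. The case analysis you worry about at $|v|=\pi/2$ is unnecessary, since $G$ is globally even and $G'(v)=g(v)>0$ for every $v>0$.

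The paper instead locates an intermediate point $s_0\in(s_{\max},s_{\min})$ with $v''(s_0)=0$, changes variables $s\mapsto v$ in $\int_{s_0}^{s_{\min}} g(v)\,ds$, and shows the resulting density $\rho(v)=-1/v'$ is monotone, then runs a symmetrization/oddness contradiction to get the sharper intermediate statement $v(s_0)>|v(s_{\min})|$. Your route is shorter, more conceptual, and avoids the trifecta lemma entirely. The paper's route, on the other hand, isolates the $\rho$-density technique that it reuses verbatim in Proposition~\ref{p36} (specifically Claim~\ref{c93}), where the left endpoint satisfies $v''(0)=0$ rather than $v'(0)=0$; your energy identity does not directly apply there since the boundary term $[v'v'']$ no longer vanishes.
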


\begin{proof}
We know $v^{\prime}(s_{\max})=0~$and $v^{\prime\prime}(s_{\max})<0;$ if
$v^{\prime\prime}(s_{\max})=0$ the trifecta (Lemma \ref{trifecta}) would preclude further critical points. Similarly, we have $v^{\prime\prime}(s_{\min
})> 0$.   There must be a point $s_{0}
\in(s_{\max},s_{\min})$ where $v^{\prime\prime}(s_{0})=0.$ To avoid the trifecta we must have $v(s_{0}%
)\geq0$.
Thus
\begin{align*}
0  &  \leq v^{\prime\prime}(s_{\min})-v^{\prime\prime}(s_{0})=\int_{s_{0}%
}^{s_{\min}}v^{\prime\prime\prime}(s)\: ds\\
&  =\int_{s_{0}}^{s_{\min}}g( v(s))\:ds=\int_{v(s_{0})}^{v(s_{\min})}g(
v)\frac{ds}{dv}\: dv\\
&  =\int_{v(s_{\min})}^{v(s_{0})}g( v)\left(  -\frac{ds}{dv}\right)  \:dv \label{f67}
\end{align*}
using a change of variables $s\mapsto v(s).$

Now let
\[
\rho(v)=\left(  -\frac{ds}{dv}\right)  =-\frac{1}{v^{\prime}(s)}.
\]
Compute%
\begin{align*}
\frac{d}{dv}\rho(v) &  =\frac{d}{ds}\left(  \frac{-1}{v^{\prime}(s)}\right)
\frac{ds}{dv}\\
&  =\frac{v^{\prime\prime}}{(v^{\prime})^{3}}\leq0\text{ on }[s_{0},s_{\min})
\end{align*}
and strictly negative on the interior. Thus, as a function of $v$, $\rho$ is
decreasing and we have for any $0<w<\min\{v(s_{0}),|v(s_{min})|\}$
\begin{equation}
\rho(w)<\rho(-w).\label{strict}%
\end{equation}
Now for purposes of contradiction assume that%
\[
v(s_{0})\leq\left\vert v(s_{\min})\right\vert .
\]
Then
\begin{align*}
0 &  \leq\int_{v(s_{\min})}^{v(s_{0})}g( v)\rho(v)\:dv\\
&  =\int_{v(s_{\min})}^{-v(s_{0})}g( v)\rho(v)\:dv+\int_{-v(s_{0})}^{v(s_{0}%
)}g( v)\rho(v)\:dv
\end{align*}
with the first term nonpositive, so we have
\begin{align*}
0 &  \leq\int_{-v(s_{0})}^{v(s_{0})}g( v)\rho(v)\:dv\\
&  =\int_{0}^{v(s_{0})}g(v)\rho(v)\:dv+\int_{-v(s_{0})}^{0}g( v)\rho\left(
v\right)  \:dv\\
&  =\int_{0}^{v(s_{0})}g( v)\rho(v)\:dv+\int_{v(s_{0})}^{0}g\left(
-v\right)  \rho\left(  -v\right)  \:d(-v)\\
&  =\int_{0}^{v(s_{0})}g( v)\left[  \rho(v)-\rho\left(  -v\right)  \right]\:
dv<0
\end{align*}
using the fact that $g$ is odd, and
with the last inequality following from \eqref{strict}. This contradicts
$v(s_{0})\leq\left\vert v(s_{\min})\right\vert .$ Thus $v\left(  s_{0}\right)
>\left\vert v(s_{\min})\right\vert $ which implies that $v(s_{\max})>v\left(
s_{0}\right)  >\left\vert v(s_{\min})\right\vert $ and proves the Lemma.
\end{proof}

Noting that an order-reversed proof holds for a maximum subsequent to a
minimum, it follows that if a solution bounces between positive local maxima
and negative local minima, the distance between these must strictly be
decreasing. We would like to show it decreases all the way to zero.

Next we quantify the difference between $v(s_{\max})$ and $v\left(
s_{0}\right)  $ in the above proof to get a more precise decay estimate.  

\begin{lemma}
\label{strictbounce}Suppose that $v$ is a solution to \eqref{ODEsin}. Suppose
that $v(s_{\max} ) \leq  \pi $ is a positive local maximum and at $s_{\min},$ the next critical point, $v$ has a
negative local minimum. Then
\[
\left\vert v(s_{\min})\right\vert <\delta(v(s_{\max}))v(s_{\max})
\]
where
\[
\delta(z)=1-\frac{z^{2}}{192}<1.
\]
If $v(s_{\max} ) > \pi$, and $v$ has a negative local minimum at $s_{\min} $, then 
$$ | v ( s _ {\min} ) | < v(s_{\max} ) - \frac{ \pi^3 }{ 192} .$$

\end{lemma}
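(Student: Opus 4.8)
The plan is to reduce the whole statement to an upper bound on $c:=v(s_0)$, where $s_0$ is the first zero of $v''$ in $(s_{\max},s_{\min})$. As in the proof of Lemma~\ref{strictmin} we have $v''<0$ on $[s_{\max},s_0)$ and $0<c<M:=v(s_{\max})$; moreover $v'''=g(v)>0$ where $v>0$, so $v''$ is strictly increasing on the initial descent and turns positive just past $s_0$, and by Lemma~\ref{strictmin} itself $|v(s_{\min})|<c$. Hence it suffices to prove $c\le\delta(M)M$ when $M\le\pi$, and $c\le M-\pi^3/192$ when $M>\pi$, and then invoke Lemma~\ref{strictmin}. (I use here that $s_{\min}$ is the next critical point after $s_{\max}$, so that $v'<0$ on all of $(s_{\max},s_{\min})$.)

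Assume $M\le\pi$ and, for contradiction, that $c>\delta(M)M=M-M^3/192$; a one-line check shows this forces $c>M/2$. I would then play two estimates against each other. \emph{(1)~A short descent forces a small speed.} On $[s_{\max},s_0]$ we have $v\in[c,M]$, $g$ nondecreasing with $g(M)\le M$, so $|v''(\tau)|=\int_\tau^{s_0}g(v)\le M(s_0-\tau)$; integrating, $A:=-v'(s_0)\le\tfrac12 ML_1^2$ where $L_1:=s_0-s_{\max}$. Since $|v''(\tau)|$ and $s_0-\tau$ are both nonincreasing in $\tau$, Chebyshev's integral inequality applied to the Fubini identity $M-c=\int_{s_{\max}}^{s_0}|v''(\tau)|(s_0-\tau)\,d\tau$ gives $M-c\ge\tfrac12 AL_1$; eliminating $L_1$ yields $A^3\le 2M(M-c)^2$, hence $A\lesssim M^{7/3}$ since $M-c<M^3/192$. \emph{(2)~Turning around forces a large speed.} Because $c>M/2$, the curve must still pass through the level $M/2$ before $v'$ can return to $0$; on the stretch $\{v\ge M/2\}$ Jordan's inequality gives $v'''=g(v)\ge\sin(M/2)\ge M/\pi$ (valid as $M\le\pi$), so $v''(\sigma)\ge\tfrac{M}{\pi}(\sigma-s_0)$ and $v'(\sigma)\ge -A+\tfrac{M}{2\pi}(\sigma-s_0)^2$ there. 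Since $v'<0$ on that stretch its length is $<\sqrt{2\pi A/M}$, and $|v'|<A$ on it, so $c-M/2<\sqrt{2\pi/M}\,A^{3/2}$; as $c-M/2>(\tfrac12-\tfrac{\pi^2}{192})M$, this forces $A\gtrsim M$. For $M\le\pi$ the bounds $A\lesssim M^{7/3}$ and $A\gtrsim M$ are incompatible (the crossover sits well above $\pi$), giving the desired contradiction, so $c\le\delta(M)M$.

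The case $M>\pi$ runs on the same two-estimate template, with $M/2$ replaced by $\pi/2$, and is in fact cleaner: in the contradiction case $c>M-\pi^3/192>\pi/2$, so on both stretches $v$ lies in the region $v\ge\pi/2$ where $g\equiv 1$. Then $v'''\equiv 1$ and estimates (1)--(2) become exact: $M-c=\beta^3/3$ and $A=\beta^2/2$ with $\beta:=-v''(s_{\max})$, so $\beta<\pi/4$ and $A<\pi^2/32$, while descending past $\pi/2$ gives $c-\pi/2<\sqrt2\,A^{3/2}$ with $c-\pi/2>\pi/2-\pi^3/192$, forcing $A>\tfrac{9}{10}$---again a contradiction. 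Thus $c\le M-\pi^3/192$ and Lemma~\ref{strictmin} finishes.

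The one genuinely load-bearing ingredient is estimate~(1): turning ``$v$ barely moves between $M$ and $c$'' into the quantitative inequality $A^3\le 2M(M-c)^2$, via the Fubini identity for $M-c$ together with the monotone-rearrangement (Chebyshev) inequality. I expect that to be the step needing the most care. Everything else is routine---checking $g(M)\le M$ and $\sin(M/2)\ge M/\pi$ on the relevant ranges, and the constant bookkeeping---and the particular denominator $192$ is just a convenient round choice making $\delta(z)=1-z^2/192$ come out $<1$ with comfortable room.
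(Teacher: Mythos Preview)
Your argument is correct, but it takes a genuinely different route from the paper's.  The paper proceeds directly rather than by contradiction: writing $k=v(s_{\max})$ and $b=-v''(s_{\max})$, it first shows $b>k/4$ by observing that if $b\le k/4$ then $v\ge k/2$ on $[0,2]$, whence a one-line computation using $2\sin x>x$ gives $v'(2)>0$, producing a positive local minimum and a trifecta contradiction.  With $b>k/4$ in hand, the crude bound $v'''\le 1$ yields $s_0\ge b$ and the Taylor estimate $v(s_0)\le v(b)\le k-\tfrac{b^3}{3}<k\bigl(1-\tfrac{k^2}{192}\bigr)$; the constant $192=3\cdot 4^3$ drops out mechanically.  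The case $k>\pi$ is identical with $k$ replaced by $\pi$.

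Your proof instead plays two bounds on $A=-v'(s_0)$ against each other: an upper bound $A^3\le 2M(M-c)^2$ obtained from Chebyshev's rearrangement inequality on $[s_{\max},s_0]$, and a lower bound on $A$ coming from the stretch past $s_0$ via Jordan's inequality $\sin(M/2)\ge M/\pi$.  The paper's approach is shorter, needs no rearrangement inequality, and avoids the numerical crossover verification (checking that the two bounds on $A$ are actually incompatible for all $M\le\pi$, which your sketch leaves as ``the crossover sits well above $\pi$'').  Your approach, on the other hand, makes the competing mechanisms more transparent and would likely adapt more flexibly if one wanted to vary the nonlinearity or optimize the constant.  In the $M>\pi$ case the two arguments converge: since $g\equiv 1$ where $v\ge\pi/2$, both reduce to the same explicit cubic integration giving $M-c=\beta^3/3$ with $\beta=-v''(s_{\max})$.
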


\begin{proof}
Consider solutions which begin from a positive local maximum at $s=0$; $v$ is
the solution of the initial value problem with
\begin{align*}
v(0) &  =v(s_{\max})=k>0,\\
v^{\prime}(0) &  =0,\\
v^{\prime\prime}(0) &  =-b<0.
\end{align*}
By reasoning in the previous proof, there is some $s_0>0$ such that $v^{\prime
\prime}(s_{0})=0$ and $v(s_{0})>0.$ 

Suppose that $v(s_{\max} ) \leq \pi.$ 
First we claim that the hypothesis that a negative minimum occurs subsequently implies that $b>\frac{k}{4}$: Assume not. It follows that
\[
v(s)\geq k-\frac{k}{4}\frac{s^{2}}{2}\geq k\left (1-\frac{s^{2}}{8}\right )
\]
while $v$ remains positive. So $v(s)\geq k/2$ while $s\in[0,2]$ and
we estimate  \
\begin{align*}
v^{\prime}(2) &  =\int_{0}^{2}v^{\prime\prime}(s)\:ds\\
&  =\int_{0}^{2}\left(  -b+\int_{0}^{s}v^{\prime\prime\prime}(t)\:dt\right)
\:ds\\
&  \geq-2\frac{k}{4}+2\sin\left(  \frac{k}{2}\right)  >0\end{align*}
as $2 \sin x > x $ for $x \in (0, \pi/2).$ 
Thus $v$ must have achieved a local minimum while its value was positive, and Lemma \ref{trifecta} then contradicts the hypothesis. Thus $b>\frac{k}{4}.$  

Next we claim that $s_{0}\geq b:$
\begin{align*}
b &  =v^{\prime\prime}(s_{0})-v^{\prime\prime}(0)\\
&  =\int_{0}^{s_{0}}v^{\prime\prime\prime}(s)\:ds\\
&  \leq\int_{0}^{s_{0}}\:ds=s_{0}.
\end{align*}

Now since $v^{\prime\prime\prime}\leq1$, we have
\[
v(s)\leq k-\frac{b}{2}s^{2}+\frac{1}{6}s^{3}.
\]
As $b\in(0,s_{0}]$ and $v$ is decreasing, we have
\begin{align*}
v(s_{0})   \leq v(b)&\leq k-\frac{b^{3}}{2}+\frac{1}{6}b^{3}\\
& =k-\frac{b^{3}}{3}\\
& <k-\frac{1}{3}\left(  \frac{k}{4}\right)  ^{3}\\
& =k\left (1-\frac{1}{192}k^{2}\right ).
\end{align*}
Therefore
\begin{align*}
v(s_{0})  & \leq
\left (1-\frac{1}{192}k^{2}\right )v(s_{\max}).
\end{align*}
As shown in the proof of the previous Lemma, $\left\vert v(s_{\min})\right\vert
<v\left(  s_{0}\right)  $, which now implies the result.

Now assume $v(s_{\max} ) >\pi$. The same argument used above implies that in order for $v$ to achieve a negative local minimum, $b > \frac \pi 4 $. Continuing with the same reasoning, we find 
\begin{align*}
v(s_{0})   \leq v(b)&\leq k-\frac{b^{3}}{3}\\
& <k-\frac{1}{3}\left(  \frac{\pi}{4}\right)  ^{3}\\
& =k-\frac{\pi^3}{192}.
\end{align*}
Therefore $|v( s _{\min} )| < v ( s_{\max} ) - \frac{\pi^3}{192}.$
\end{proof}

\begin{proposition}
\label{converges}Suppose a solution $v$ to \eqref{ODEsin} is bounded for
$s>0.$ Then $v(s)\rightarrow0.$
\end{proposition}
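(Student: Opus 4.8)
The plan is to classify a bounded solution $v$ by how many critical points it has on $(0,\infty)$, using the trifecta (Lemma~\ref{trifecta}) to convert boundedness into rigid structural information. The first step I would carry out is a local analysis at a critical point $s^{*}$ (where $v'(s^{*})=0$): since $v$ is bounded it can neither increase to $+\infty$ nor decrease to $-\infty$, so the trifecta forbids $g(v(s^{*})),v'(s^{*}),v''(s^{*})$ from all being nonnegative with one positive, and likewise the mirrored statement. Hence if $v(s^{*})>0$ then $v''(s^{*})<0$, so $s^{*}$ is a nondegenerate local maximum; if $v(s^{*})<0$ then $s^{*}$ is a nondegenerate local minimum; and if $v(s^{*})=0$ then $v''(s^{*})=0$, so by uniqueness for $v'''=g(v)$ (here $g$ is Lipschitz) $v\equiv0$ and we are done. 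Consequently, assuming $v\not\equiv0$, the critical points of $v$ are isolated, cannot accumulate at a finite value, and — being alternately nondegenerate maxima and minima — strictly alternate between positive local maxima and negative local minima.

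Next I would dispose of the non-oscillatory case: if $v$ has only finitely many critical points on $(0,\infty)$, then $v$ is eventually monotone, hence $v(s)\to L$ for some finite $L$, and $v'''=g(v)\to g(L)$. If $g(L)\neq0$ then $v''$ is eventually monotone with derivative bounded away from zero, so $|v''|\to\infty$, forcing $|v'|\to\infty$ and then $|v|\to\infty$, contradicting boundedness. Therefore $g(L)=0$, i.e. $L=0$, and $v(s)\to0$.

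In the remaining case $v$ has infinitely many critical points, which (being isolated) form a sequence $s_{1}<s_{2}<\cdots\to\infty$ alternating between positive local maxima of values $M_{1},M_{2},\dots$ and negative local minima of values $-|m_{1}|,-|m_{2}|,\dots$. Lemma~\ref{strictmin} and its mirror give $M_{1}>|m_{1}|>M_{2}>|m_{2}|>\cdots$, so $M_{i}$ and $|m_{i}|$ decrease to a common limit $M_{\infty}\geq0$. The $v(s_{\max})>\pi$ clause of Lemma~\ref{strictbounce} shows that a maximum exceeding $\pi$ decreases the subsequent minimum by at least $\pi^{3}/192$, which can occur only finitely often since the values stay nonnegative; so for large $i$ we have $M_{i}\leq\pi$, and Lemma~\ref{strictbounce} then gives $M_{i+1}\leq|m_{i}|<\bigl(1-M_{i}^{2}/192\bigr)M_{i}$. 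If $M_{\infty}>0$ this yields $M_{i+1}/M_{i}<1-M_{\infty}^{2}/192<1$ for all large $i$, forcing $M_{i}\to0$, a contradiction; hence $M_{\infty}=0$. Finally, since $v$ is monotone between consecutive critical points, for $s\in[s_{i},s_{i+1}]$ we have $|v(s)|\leq\max(|v(s_{i})|,|v(s_{i+1})|)\to0$, so $v(s)\to0$.

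I expect the genuine difficulty to lie not in the oscillatory estimates — Lemmas~\ref{strictmin} and~\ref{strictbounce} already supply the quantitative decay — but in the preliminary structural work: ruling out an eventually monotone solution, a degenerate critical point, or an accumulation of critical points, each of which must be excluded via the trifecta and ODE uniqueness before the bounce machinery can be invoked, and then arguing that control of the critical values alone controls $v$ everywhere.
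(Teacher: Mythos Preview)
Your proposal is correct and follows essentially the same route as the paper: split into the eventually-monotone case versus the infinitely-oscillating case, use the trifecta to force every critical point to be a positive nondegenerate maximum or a negative nondegenerate minimum, and then feed the alternating extrema into Lemmas~\ref{strictmin} and~\ref{strictbounce} to drive the amplitude to zero. The paper's proof is terser, leaving implicit several points you spell out---the nondegeneracy/isolation of critical points, the $v(s^{*})=0$ case via ODE uniqueness, and the passage from decay of critical values to decay of $v$ itself via monotonicity between consecutive extrema---but the skeleton is the same.
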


\begin{proof}
Let $v$ be a bounded solution. Either there are infinitely many critical
points, or finitely many. If there are finitely many (or none), the solution
must be monotone on an unbounded interval, so it converges to a
limit. This limit can't be anything other than $0$, otherwise the third
derivative would be uniformly positive or uniformly negative, contradicting boundedness.

So assume that $v$ has infinitely many critical points. Local maxima must be
positive and local minima must be negative, by the trifecta argument (Lemma \ref{trifecta}). By Lemma \ref{strictmin}, the maxima
must have strictly decreasing values and the minima are strictly increasing. We
claim the only value they can converge to is $0.$ Suppose that an infinite
sequence of local maxima has $k_{0}>0$ as a limit point. Then somewhere in
the sequence we have $v(s_{\max})< \min (\frac{1}{\delta(k_0)} k_{0}, k_0 + \frac {\pi^3}{192} )$. Lemma \ref{strictbounce} then
forces subsequent maxima below $k_{0}$ to obtain a contradiction.
\end{proof}

\subsection{Step 2}

\label{s33} Now we need to show that a solution to \eqref{ODE2} which is
bounded must actually be a solution to \eqref{ODE}. This is a modification of
the proof of Lemma \ref{strictmin}.

\begin{proposition}
\label{p36} A solution to \eqref{ODE2} with initial conditions \eqref{ICs}
that reaches $0$ or $\pi$ at some $s>0$ escapes to $-\infty$ or $\infty$,
respectively, never returning to $[0,\pi]$. In particular, any solution to
\eqref{ODE2} with these initial conditions which remains bounded is also a
solution to \eqref{ODE}.
\end{proposition}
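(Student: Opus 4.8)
The plan is to pass to the shifted variable $v=u-\pi/2$, which solves the odd equation \eqref{ODEsin}: the boundary values $u\in\{0,\pi\}$ become $v\in\{-\pi/2,\pi/2\}$, the two cases of the statement are exchanged by the symmetry $v\mapsto-v$ of \eqref{ODEsin}, and the initial conditions \eqref{ICs} become $v(0)=-\pi/2$, $v'(0)=a$, $v''(0)=0$. We may assume $a>0$; if $a\le 0$ then $u<0$ for all small $s>0$ and, since $u'''=-1$ there, $u$ decreases monotonically to $-\infty$ and never meets $\{0,\pi\}$ again, so the hypothesis is vacuous. The engine of the argument is the energy identity
\[
\frac{d}{ds}\Bigl(v'v''-G(v)\Bigr)=(v'')^{2}\ge 0,\qquad G(v):=\int_{0}^{v}g,
\]
which is immediate from $v'''=g(v)=G'(v)$ (and legitimate across the piecewise definition, since $g$ is continuous, so $G\in C^{1}$ and $v\in C^{3}$). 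Integrating from $0$ and using $v''(0)=0$,
\[
v'(s)\,v''(s)=G(v(s))-G(v(0))+\int_{0}^{s}(v'')^{2}\,d\sigma .
\]
The arithmetic fact that makes everything work is that the two boundary heights lie at the same level of the potential: $G(-\pi/2)=G(\pi/2)=1$.

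Next I would extract the signs of $v'$ and $v''$ at the first exit. Let $s_{1}>0$ be the first time $v$ reaches $\{-\pi/2,\pi/2\}$ (it exists by hypothesis). Since $G(v(0))=G(-\pi/2)=1$ and $G(v(s_{1}))=1$ as well, the identity collapses to $v'(s_{1})v''(s_{1})=\int_{0}^{s_{1}}(v'')^{2}$, and this is strictly positive because $v''\not\equiv 0$ near $s=0$ (already $v'''(0)=g(-\pi/2)=-1\neq 0$). Hence $v'(s_{1})$ and $v''(s_{1})$ are nonzero and have a common sign, and that sign is pinned down by how $v$ reaches the boundary: on $(0,s_{1})$ we have $v\in(-\pi/2,\pi/2)$, using $a>0$ so that $v$ rises from $-\pi/2$; therefore if $v(s_{1})=\pi/2$ then $v'(s_{1})\ge 0$, forcing $v'(s_{1}),v''(s_{1})>0$, and if $v(s_{1})=-\pi/2$ then $v'(s_{1})\le 0$, forcing $v'(s_{1}),v''(s_{1})<0$.

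In the case $v(s_{1})=\pi/2$ we have $g(v(s_{1}))=1>0$, $v'(s_{1})>0$, $v''(s_{1})>0$, so the Trifecta (Lemma \ref{trifecta}) gives that $v$ increases for all $s\ge s_{1}$ with no further critical point; thus $v>\pi/2$ for $s>s_{1}$, where $v'''\equiv 1$, so $v''$ is positive and increasing and $v\to+\infty$. The case $v(s_{1})=-\pi/2$ is handled identically after replacing $v$ by $w=-v$, which again solves \eqref{ODEsin} by oddness of $g$, and yields escape to $-\infty$. In $u$-coordinates: a solution of \eqref{ODE2} with initial conditions \eqref{ICs} that reaches $\pi$ escapes to $+\infty$, one that reaches $0$ escapes to $-\infty$, and in either case it never re-enters $[0,\pi]$ (this also makes the dichotomy in the statement self-consistent, since whichever boundary is hit first is then the only one hit). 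Finally, a bounded solution cannot reach $0$ or $\pi$ for any $s>0$; since $u'(0)=a>0$ it enters $(0,\pi)$ immediately, so $u(s)\in(0,\pi)$ for all $s>0$, where $f(u)=-\cos u$, hence $u$ solves \eqref{ODE}.

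The step I expect to be the real obstacle is pinning down the sign of $v''(s_{1})$: nothing local prevents a solution from arriving at the boundary with $v''<0$ and turning back into $[0,\pi]$, and ruling this out is a global matter. The energy identity closes the gap only because of two structural inputs—the initial condition $v''(0)=0$ and the coincidence $G(-\pi/2)=G(\pi/2)$—and once those signs are secured the Trifecta finishes everything mechanically.
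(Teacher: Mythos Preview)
Your proof is correct and takes a genuinely different route from the paper's. The paper argues by case analysis: it splits according to whether $u$ has a critical point before first reaching $\pi$ or $0$, and in the relevant cases invokes Lemma~\ref{strictmin} (or repeats its change-of-variables computation) to force a contradiction with a putative subsequent extremum. You instead discover the monotone quantity
\[
\frac{d}{ds}\bigl(v'v''-G(v)\bigr)=(v'')^{2}\ge 0,\qquad G(v)=\int_{0}^{v}g,
\]
and exploit two structural coincidences---the initial condition $v''(0)=0$ and the equality $G(-\pi/2)=G(\pi/2)=1$---to read off $v'(s_{1})v''(s_{1})>0$ at the first exit time, after which the Trifecta finishes immediately. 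This bypasses Lemma~\ref{strictmin} and the case split entirely, replacing them with a single Lyapunov-type identity; the paper's approach, on the other hand, keeps the argument within the framework already set up for the decay analysis in Step~1. Your observation that $G$ takes equal values at $\pm\pi/2$ is the clean global reason why the paper's integrals in Claim~\ref{c93} come out with the needed sign.
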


\begin{proof}
We break this into three claims and then draw the conclusion.  

\begin{claim}
\label{c92}A solution to \eqref{ODE2} with initial conditions \eqref{ICs}
that reaches a critical point with critical value in $\left(  0,\pi\right)  $
and then exceeds $\pi$ and reaches a local maximum violates Lemma
\ref{strictmin}.
\end{claim}

\begin{claim}
\label{c93}A solution to \eqref{ODE2} with initial conditions \eqref{ICs}
that reaches $\pi$ without first finding a critical point will not achieve a
subsequent local maximum.  
\end{claim}

\begin{claim}
\label{c91}A solution to \eqref{ODE2} with initial conditions \eqref{ICs}
that reaches a critical point less than $\pi$, crosses $0$ and subsequently
achieves a local minumum violates Lemma \ref{strictmin}
\end{claim}

Proof of Claim \ref{c92}. Let $s_{1}$ be the first time $u$ crosses $\pi$ and
then let $s_{0}<s_{1}$ be the last critical point before this.  If
$u(s_{0})>$ $\pi/2$ this is a
trifecta;  no further critical points occur. Now if $u(s_{0})\leq
\pi/2$ we can translate Lemma \ref{strictmin}; this corresponds to solution
$v$ which achieves a minimum with a value in $(-\pi/2,0]$.  By the assumption, the subsequent
maximum would have a value strictly larger than $\pi/2$; this is
impossible by Lemma \ref{strictmin}.  

Proof of Claim \ref{c93}. As we are assuming that $u^{\prime\prime}(0)=0$, we can
repeat the proof of  Lemma \ref{strictmin}. Let $s_{1}$ be where $u$
crosses $\pi.$  Note that if $u^{\prime\prime}(s_{1})\geq0$, then $u$ will
enjoy the trifecta and will increase indefinitely afterwards.  So assuming
that $u$ is to achieve a critical value larger than $\pi$ leads us to
$u^{\prime\prime}(s_{1})<0.  $ After translating $v=u-\pi/2$ we get
\begin{align}
0 &  >u^{\prime\prime}(s_{1})-u^{\prime\prime}(0)\label{c192}\\
&  =v^{\prime\prime}(s_{1})-v^{\prime\prime}(0)=\int_{0}^{s_{1}}%
v^{\prime\prime\prime}(s)\:ds\nonumber\\
&  =\int_{0}^{s_{1}}g(v(s))\:ds=\int_{-\pi/2}^{\pi/2}g(v)\frac{ds}%
{dv}\:dv.\nonumber
\end{align}
Then we repeat the argument in the proof of  Lemma \ref{strictmin}; let
\[
\rho(v)=\frac{ds}{dv}=\frac{1}{v^{\prime}(s)}
\]
so that
\[
\frac{d}{dv}\rho(v)=\frac{-v^{\prime\prime}}{(v^{^{\prime}})^{3}}.
\]

As before, $v^{\prime\prime}(s)<0$ for all $s \in (0, s_1]$. 
It follows that
\[
\frac{d}{dv}\rho(v)=\frac{-v^{\prime\prime}}{(v^{^{\prime}})^{3}}\geq0\text{
for all }s\in\lbrack0,s_{1}].
\]
Integrating the increasing odd function $g$ against the increasing density function
$\rho$ we get
\[
\int_{-\pi/2}^{\pi/2}g\left(  v\right)  \rho(v)dv>0
\]
contradicting (\ref{c192}).  Thus $u$ cannot have a further critical value
larger than $\pi$, it must increase forever.

Proof of Claim \ref{c91}.  Translating $v=u-\pi/2$, look at the last critical
point $v$ achieves before crossing $- \pi/2$. If the critical value is negative, this represents a negative
trifecta. If the critical value is positive but less than $\pi/2$, the
subsequent minimum which we are assuming is less than $-\pi/2$ violates Lemma
\ref{strictmin} .  

The conclusion of the Lemma quickly follows:  Any solution reaching $\pi$ is
covered by Claim \ref{c92} or Claim \ref{c93}.  Any solution reaching 0
cannot have reached $\pi$ first by Claim \ref{c92} and Claim \ref{c93}, thus
Claim \ref{c91} applies. 
\end{proof}
\subsection{Step 3.} We now demonstrate that a bounded solution to \eqref{ODE2} with the initial conditions \eqref{ICs} exists. 
\begin{lemma}
\label{boundedsolutionexists}There exists a value $a$ such that the solution
to
\[
u^{\prime\prime\prime}=f(u)
\]
with
\begin{align*}
u(0)  &  =0,\\
u^{\prime}(0)  &  =a,\\
u^{\prime\prime}(0)  &  =0
\end{align*}
is bounded for all $s>0.$
\end{lemma}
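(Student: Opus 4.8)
The plan is a shooting argument in the initial slope $a$. Put
$$ A=\{a>0 : u_a(s)=\pi\text{ for some }s>0\},\qquad B=\{a>0 : u_a(s)=0\text{ for some }s>0\}. $$
By Proposition \ref{p36}, a solution with the initial conditions \eqref{ICs} that reaches $\pi$ (resp.\ $0$) runs off to $+\infty$ (resp.\ $-\infty$) and never returns to $[0,\pi]$; hence $A\cap B=\emptyset$, and if $a$ lies in neither set then $u_a(s)\in(0,\pi)$ for every $s>0$, which is exactly the bounded solution we are after. So it is enough to find one $a$ outside $A\cup B$.

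Next I would observe that $A$ and $B$ are open. Since $f$ is globally Lipschitz, $u_a(s)$ depends continuously on $a$, uniformly on compact $s$-intervals. If $a_0\in A$ then by Proposition \ref{p36} $u_{a_0}$ exceeds $\pi$ on some compact interval, hence so does $u_a$ for $a$ near $a_0$; as $u_a(0)=0$, the intermediate value theorem gives $u_a=\pi$ somewhere and $a\in A$. The argument for $B$ is the same, using that $u_a(0)=0$, $u_a'(0)=a>0$ force $u_a>0$ on a short initial interval while, for $a$ near $a_0\in B$, $u_a<0$ somewhere. (By Lemma \ref{order} one even gets that $A$ is a ray $(\alpha,\infty)$ and $B$ a ray $(0,\beta)$, but only connectedness is needed below.) Since $(0,\infty)$ is connected and $A,B$ are disjoint open sets, once both are nonempty we must have $A\cup B\neq(0,\infty)$, and any $a$ in the complement proves the lemma.

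The crux is therefore $A\neq\emptyset$ and $B\neq\emptyset$, which I would get from crude estimates. For $A$, take $a$ large: while $u_a$ stays in $[0,\pi]$ we have $|f(u_a)|\le1$, so $|u_a''|\le s$ and $u_a'(s)\ge a-s^2/2>0$ on $[0,\sqrt{2a})$; hence $u_a$ is strictly increasing there and $u_a(\sqrt a)\ge a^{3/2}-\tfrac16 a^{3/2}=\tfrac56 a^{3/2}$, which is $>\pi$ for $a$ large. An increasing solution can leave $[0,\pi]$ only through $\pi$, so $u_a$ reaches $\pi$ and $a\in A$. For $B$, take $a$ small: while $u_a\in[0,\pi/2]$ we have $f(u_a)=-\cos u_a\in[-1,0]$, so $u_a''\le0$ and $u_a\le as$; a short computation shows $u_a'$ first vanishes at some $s_{\max}\le 2\sqrt a$ with $0<u_a(s_{\max})\le 2a^{3/2}<\pi/2$ for $a$ small, while $u_a''(s_{\max})\le0$. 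Translating $v=u_a-\pi/2$, the point $s_{\max}$ is a critical point of $v$ with $v(s_{\max})\in(-\pi/2,0)$, $v''(s_{\max})\le0$ and $v'''(s_{\max})=\sin v(s_{\max})<0$, so the order-reversed trifecta (Lemma \ref{trifecta}) makes $u_a$ decrease monotonically, without further critical points, for all $s>s_{\max}$. A finite positive limit is impossible, since then $u_a'''$ would stay negatively bounded away from $0$, forcing $u_a'\to-\infty$; hence $u_a$ must pass through $0$ and $a\in B$.

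I expect the genuinely load-bearing step to be the disjointness in the first paragraph, which uses Proposition \ref{p36} in an essential way; the two nonemptiness estimates and the continuous-dependence argument are routine, the only subtle one being that a small-$a$ solution really does fall all the way back down to $0$ rather than levelling off, which the order-reversed trifecta rules out.
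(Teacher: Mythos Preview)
Your argument is correct and is a legitimate shooting proof, but it is organized differently from the paper's. The paper fixes a time $m$, uses the order preservation (Lemma~\ref{order}) together with continuous dependence to produce a genuine closed interval $I_m=\{a:0\le u_a(m)\le\pi\}$, and then invokes Proposition~\ref{p36} to show $I_{m+1}\subset I_m$; the desired $a$ is any point in the nested intersection $\bigcap_m I_m$. You instead split $(0,\infty)$ into the two ``escape'' sets $A$ and $B$, use Proposition~\ref{p36} for disjointness, continuous dependence for openness, and then the connectedness of $(0,\infty)$ to force a gap. The load-bearing ingredient---Proposition~\ref{p36}---is the same in both; what differs is that the paper leans on Lemma~\ref{order} to get that each $I_m$ is an interval (hence compact), while you replace that structural input by explicit large/small-$a$ estimates to show $A$ and $B$ are nonempty. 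Your nonemptiness estimates are fine; the only place a grader might ask for one more line is the bound $s_{\max}\le 2\sqrt a$, which needs the observation that while $u_a\le as\le 2a^{3/2}$ one has $\cos u_a\ge \tfrac12$ for $a$ small, so $u_a'''\le -\tfrac12$ and $u_a'(s)\le a-s^2/4$. With that, everything goes through.
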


\begin{proof}
Fix any $m\in\mathbb{N}.$ It's easy to check that there are values of $a$ such
that $u_{a}(m)>10$ and $u_{a}(m)<-10.$ By the order preservation property
Lemma \ref{order} and continuous dependence on initial conditions, there is a
set $I_{m}=\left[  a_{m},b_{m}\right]  $ such that
\begin{align*}
u_{a_{m}}(m)  &  =0,\\
u_{b_{m}}(m)  &  =\pi.
\end{align*}
By the order preservation property,
\[
I_{m}=\left\{  a\in\mathbb{R}^{+}\mid0\leq u_{a}(m)\leq\pi\right\}  .
\]
We claim that
\[
I_{m+1}\subset I_{m}.
\]
This follows immediately from Proposition \ref{p36}: If $u_{a}(m+1)$
$\in\lbrack0,\pi]$ then it cannot have left the region prior to that, so
$u_{a}(m)\in\left(  0,\pi\right)  .$

Now we conclude that
\[
I_{M}=\bigcap_{m\leq M}I_{m}%
\]
and we can take
\[
A=\bigcap_{m\in\mathbb{N}}I_{m}%
\]
which is the intersection of compact sets. Now the intersection of compact
sets either is nonempty or must be empty at a finite stage. By the shooting,
each $[a_{m},b_{m}]$ is nonempty. It follows that there must be at least one
point $a_{\infty}\in A.$ The solution shooting from $a_{\infty}$ stays bounded
for all time.
\end{proof}

\subsection{Step 4.} We may now take advantage of the odd initial conditions and the symmetry of the equation to reflect the solution, extending it to a bounded entire solution. 
\begin{corollary}
There exists $a$ such that the solution to the ODE 
$$ \theta^{\prime \prime \prime } = -\cos \theta$$ with initial
conditions
\begin{align*}
\theta(0)  &  =0,\\
\theta^{\prime}(0)  &  =a,\\
\theta^{\prime\prime}(0)  &  =0
\end{align*}
is bounded for all time and converges to $\pi/2$ as $s\rightarrow\infty$ and
$-\pi/2$ as $s\rightarrow-\infty.$
\end{corollary}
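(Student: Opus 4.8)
The plan is to assemble the four steps that precede this statement; almost all of the work has already been done. Take $a=a_{\infty}$ to be the value furnished by Lemma~\ref{boundedsolutionexists}, and let $\theta$ be the solution of $\theta'''=-\cos\theta$ with $\theta(0)=0$, $\theta'(0)=a$, $\theta''(0)=0$. Since the right-hand side $-\cos\theta$ is bounded, $\theta''$ grows at most linearly, $\theta'$ at most quadratically and $\theta$ at most cubically, so no finite-time escape can occur and $\theta$ is defined on all of $\mathbb{R}$.

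First I would identify $\theta$ with the bounded solution of the modified equation on the positive axis. The solution $u_{a}$ of \eqref{ODE2} with initial conditions \eqref{ICs} is bounded for $s>0$ by the choice of $a$, so Proposition~\ref{p36} guarantees that $u_{a}$ never leaves $[0,\pi]$ for $s>0$ and, on that range, solves the original equation \eqref{ODE}. By uniqueness of solutions to the initial value problem, $\theta=u_{a}$ on $[0,\infty)$; in particular $\theta(s)\in[0,\pi]$ for all $s\geq 0$.

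Next comes convergence at $+\infty$. Set $v=\theta-\pi/2$, so that $v(s)\in[-\pi/2,\pi/2]$ for $s\geq 0$, and there $g(v)=\sin v=-\cos\theta=\theta'''=v'''$; hence $v$ is a solution of \eqref{ODEsin} that is bounded for $s>0$. Proposition~\ref{converges} then yields $v(s)\to 0$, i.e.\ $\theta(s)\to\pi/2$ as $s\to\infty$, and $\theta$ is bounded on $[0,\infty)$.

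Finally I would use the symmetry of the equation to transfer this to the negative axis. The ODE $\theta'''=-\cos\theta$ is invariant under the involution $\theta(s)\mapsto-\theta(-s)$ (the third derivative changes sign under $s\mapsto -s$ while $\cos$ is even), and the initial data $(0,a,0)$ is fixed by it, so uniqueness forces $\theta(-s)=-\theta(s)$ for all $s$. Oddness together with the previous paragraph then shows that $\theta$ is bounded on all of $\mathbb{R}$ and that $\theta(s)\to-\pi/2$ as $s\to-\infty$. I do not anticipate a real obstacle: all the substance is in Steps~1--3, and the only points needing care are invoking Proposition~\ref{p36} to know that the bounded solution of \eqref{ODE2} actually solves \eqref{ODE} on $[0,\infty)$, and spelling out the reflection $\theta(s)=-\theta(-s)$ precisely enough that the forward asymptotics really deliver the limit $-\pi/2$.
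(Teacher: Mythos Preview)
Your proposal is correct and follows essentially the same route as the paper: invoke Lemma~\ref{boundedsolutionexists} to get $a$, use Proposition~\ref{p36} to identify the bounded solution of \eqref{ODE2} with a solution of \eqref{ODE} on $[0,\infty)$, apply Proposition~\ref{converges} to obtain $\theta\to\pi/2$, and then use the oddness symmetry $\theta(s)=-\theta(-s)$ to handle $s<0$. If anything, you are slightly more explicit than the paper in justifying global existence and in spelling out the invariance of the ODE and initial data under the involution.
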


\begin{proof}
By Lemma
\ref{boundedsolutionexists}, there exists $a$ such that the initial conditions produce a solution $\theta(s)$ to (\ref{ODE2}) which is bounded for $s>0$. By Proposition \ref{p36}, $\theta(s)$ is a solution to
(\ref{ODE}), and it converges to $\frac\pi2 $ as $s \to\infty$ by Proposition
\ref{converges}. The initial conditions for $\theta$ given in \eqref{ICs} are odd.
The third order equation is even, so we may reflect the solution
$\theta(s)=-\theta (  -s)  $ for $s<0$ to obtain a solution for all $s.$ Reflecting the solution gives the desired properties in the
negative direction.
\end{proof}

\subsection{Step 5.} Finally, we can integrate the tangent directions to construct the profile curve. 
Given a tangent direction $\theta(s)$ we are left to solve the standard ODE
\begin{align*}
\frac{d}{ds}\gamma(s)  &  =T(s)=\left(  \cos\theta(s),\sin\theta(s)\right) ,\\
\gamma(0)  &  =\left(  0,0\right)
\end{align*}
which follows from basic ODE theory. Given that $\theta$ solves \eqref{ODE},
the result is a curve whose flow under \eqref{equation1} results in translation.

The solution $\gamma$ is clearly an immersion, the tangent vector is
nonvanishing by construction. To see that it must be a proper immersion, note
that because $\theta(s)\rightarrow\pi/2$ as $s\rightarrow\infty$ we can find a
value $\sigma_{0}$ such that $\theta(s)\in\left(  \frac{\pi}{4},\frac{3\pi}%
{4}\right)  $ for $s\geq\sigma_{0}.$ Letting $\gamma(s)=\left(
x(s),y(s)\right)  $ we have
\[
y\left(  s\right)  -y(\sigma_{0})=\int_{\sigma_{0}}^{s}\sin\theta
(s)\:ds\geq\frac{\sqrt{2}}{2}(s_{1}-\sigma_{0})\text{ for }s>\sigma_{0}.
\]
Repeating this argument for negative values of $s,$ we see the immersion must
be proper. This proves Theorem \ref{theorem}.

\
\bibliographystyle{amsalpha}
\bibliography{hflow}

\end{document}